\title{\bf Strong approximations in a charged-polymer model}
\author{Yueyun Hu\\Universit\'e Paris 13 \and   Davar Khoshnevisan\\University of Utah}
\date{November 19, 2009}
\renewcommand{\theequation}{\thesection.\arabic{equation}}
\newtheorem{theorem}{Theorem}[section]
\newtheorem{lemma}[theorem]{Lemma}
\newtheorem{proposition}[theorem]{Proposition}
\theoremstyle{definition}{}
\newcommand{\arrowfill}[1]{\ensuremath{\mathop{\hbox spread10pt{\rightarrowfill}}\limits^{#1}}}
\newcommand{\eqnsection}{
\renewcommand{\theequation}{\thesection.\arabic{equation}}
    \makeatletter
    \csname  @addtoreset\endcsname{equation}{section}
    \makeatother}
\def\r{{\mathbf R}}
\def\e{{\mathrm E}}
\def\p{{\mathrm P}}
\def\z{{\mathbf Z}}
\def\law{\, {\buildrel{\rm law} \over =}\, }
\def\1{\mathbf{1}}
\def\as{\rm a.s.}
\def\qed{\hfill$\Box$}
\def\d{{\rm d}}
\begin{document}\onehalfspacing
\maketitle

\begin{abstract}
    We study the large-time behavior of the charged-polymer Hamiltonian
    $H_n$ of Kantor and Kardar [Bernoulli case] and Derrida, Griffiths, and Higgs
    [Gaussian case], using strong approximations to Brownian motion. Our results
    imply, among other things, that in one dimension the process $\{H_{[nt]}\}_{0\le t\le 1}$
    behaves like  a Brownian motion, time-changed by the intersection
    local-time process of an independent Brownian motion. Chung-type LILs
    are also discussed.\\

    \noindent{\it Keywords:}
        Charged polymers, strong approximation, local time.\\
    \noindent{\it \noindent AMS 2000 subject classification:}
        Primary: 60K35; Secondary: 60K37.\\
    \noindent{\it Running Title:} Polymer measures, strong approximations.
\end{abstract}

\section{Introduction}

Consider  a sequence $\{q_i\}_{i=1}^\infty$  of independent, identicallly-distributed
mean-zero random variables, and let $S:=\{S_i\}_{i=0}^\infty$
denote an independent simple random walk on $\z^d$ starting from $0$.  For $n\ge 1$, define
\begin{equation}
    H_n:= \mathop{\sum\sum}_{1\le i < j \le n} q_i q_j \1_{\{S_i=S_j\}};
\end{equation}
this is the Hamiltonian of a socalled ``charged polymer model.'' See Kantor and Kadar
\cite{KK91} in the case that the $q_i$'s are Bernoulli, and Derrida,
Griffiths, and Higgs \cite{DGH92} for the case of Gaussian random variables.
Roughly speaking, $q_1,q_2,\ldots$ are random charges that
are placed on a polymer path modeled by the trajectories of $S$;
and one can construct a  Gibbs-type polymer measure from the Hamiltonian $H_n$.

We follow Chen \cite{C08} (LIL and moderate deviations), Chen and
Khoshnevisan \cite{CK09} (comparison between $H_n$ and the random
walk in random scenery model), and Asselah \cite{A09+} (large deviations
in high dimensional case), and continue the analysis of the
Hamiltonian $H_n$. We assume here and in the sequel that
\begin{equation}
    \e(q_1^2)=1\ \text{and}\
    \e \left(|q_1|^{p(d)}\right) <\infty \ \text{ where }\ p(d):=
    \begin{cases}
        6&\text{if $d=1$},\\
        4&\text{if $d\ge 2$}.
    \end{cases}
\end{equation}

\begin{theorem}\label{T1}
    On a possibly-enlarged probability space, we can
    define a version of $\{H_n\}_{n=1}^\infty$ and a one-dimensional Brownian motion
    $\{\gamma(t) \}_{t\ge0}$ such that     the following holds almost surely:
    \begin{equation}
        H_n = \begin{cases}\displaystyle
            \frac{1}{\sqrt 2}\,\gamma\left(\int_{-\infty}^\infty(\ell_n^x)^2\,\d x
            \right) + o ( n^{\frac34-\epsilon} )&\text{if $d=1$ and $0<\epsilon< {1\over 24}$},\\[3mm]\displaystyle
        \frac{1}{\sqrt{2\pi}}\,\gamma(n\log n) + O( n^{\frac12} \log \log n)&\text{if $d=2$},\\[3mm]
            \sqrt\kappa\,\gamma(n)+o (n^{\frac12-\epsilon} )&\text{if $d\ge 3$ and  $0<\epsilon< {1\over 8}$},
        \end{cases}
    \end{equation}
    where $\{\ell_t^x \}_{t\ge 0,x\in\r}$ denotes the local times of a
    linear Brownian motion $B$ independent of $\gamma$,
    and $\kappa:=\sum_{k=1}^\infty\p\{S_k=0\}$.
\end{theorem}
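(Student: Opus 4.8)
The plan is to condition on the random walk $S$ and analyze the conditional law of $H_n$ as a martingale, then apply a strong-approximation (Skorokhod-type / martingale embedding) theorem. Fix the walk $S$ and write $H_n = \sum_{j=2}^n q_j Y_j$ where $Y_j := \sum_{i=1}^{j-1} q_i \1_{\{S_i=S_j\}}$ is measurable with respect to $\mathcal{F}_{j-1} = \sigma(q_1,\dots,q_{j-1}, S)$. Then $\{H_n\}$ is an $\mathcal{F}_n$-martingale (under $\P$, i.e. conditionally on $S$) with conditional increment variance $\E[(H_n-H_{n-1})^2 \mid \mathcal{F}_{n-1}] = Y_n^2$ since $\e(q_n^2)=1$. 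The predictable quadratic variation is therefore $\langle H\rangle_n = \sum_{j=2}^n Y_j^2$, and a direct expansion gives $\E[\sum_{j=2}^n Y_j^2 \mid S] = \sum_{j=2}^n \sum_{i=1}^{j-1}\1_{\{S_i=S_j\}} = \sum\sum_{1\le i<j\le n}\1_{\{S_i=S_j\}} =: Q_n$, the self-intersection local time of $S$ up to time $n$. The first key step is to show $\sum_{j=2}^n Y_j^2$ concentrates around $Q_n$: one computes the conditional variance of $\sum Y_j^2$ (this is where the fourth, and in $d=1$ the sixth, moment hypothesis on $q_1$ enters) and shows the fluctuation is of lower order than the error terms claimed.

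Next I would invoke a quantitative martingale embedding theorem — the Skorokhod embedding with moment bounds, or more precisely a strong approximation result for martingales with bounded conditional moments (in the style of Strassen, or the Hall–Heyde martingale CLT refined to a.s. rates) — to construct, on an enlarged space, a Brownian motion $\gamma$ with
\[
    H_n = \gamma(\langle H\rangle_n) + (\text{error}),
\]
where the error is controlled in terms of $\sum_j \E[|q_jY_j|^p\mid\mathcal F_{j-1}]$ and the growth of $\langle H\rangle_n$. Combining with the previous step, $\langle H\rangle_n$ can be replaced by $Q_n$ up to an admissible error (using continuity estimates / modulus of continuity of $\gamma$, i.e. $|\gamma(a)-\gamma(b)| \lesssim |a-b|^{1/2}\log(\cdots)$). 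This reduces matters to understanding the asymptotics of $Q_n$, the self-intersection local time of $S$.

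The final step is dimension-dependent asymptotics of $Q_n$. For $d\ge 3$, $S$ is transient and $Q_n/n \to \kappa = \sum_{k\ge1}\p\{S_k=0\}$ a.s. with a polynomial rate, giving $H_n = \sqrt\kappa\,\gamma(n) + o(n^{1/2-\epsilon})$. For $d=2$, $\e[Q_n] \sim (2\pi)^{-1} n\log n$ and $Q_n = (2\pi)^{-1}n\log n + O(n(\log\log n)^{?})$ in a suitable sense, with the $1/\sqrt{2\pi}$ coming from $\gamma(Q_n)\approx\gamma((2\pi)^{-1}n\log n)$; one must track the variance of $Q_n$ carefully to land the $O(n^{1/2}\log\log n)$ error. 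For $d=1$, the invariance principle (Donsker plus a strong approximation of $S$ by Brownian motion $B$, e.g. KMT) gives $n^{-3/2}Q_{[nt]} \to \int_{\mathbb R}(\ell_t^x)^2\,dx$, and with the KMT rate one obtains $Q_n = \int_{\mathbb R}(\ell_n^x)^2\,dx + o(n^{3/4-\epsilon})$ after rescaling the local times of $B$; note the factor $1/\sqrt2$ is $1/\sqrt2$ because the scaling of $\langle H\rangle$ pairs with the $\frac12$ in "$i<j$". Here $B$ and $\gamma$ are independent because $B$ is built from $S$ while $\gamma$ is built from the embedding of the $q$-martingale given $S$.

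The main obstacle is the $d=1$ case: one needs the self-intersection local time $Q_n$ of the \emph{simple random walk} to be approximated by $\int_{\mathbb R}(\ell_n^x)^2\,dx$ of Brownian motion with an error $o(n^{3/4-\epsilon})$, which requires combining the KMT strong approximation of $S$ by $B$ with delicate control on the modulus of continuity of Brownian local time in both space and time, and then feeding this (together with the martingale embedding error and the $\langle H\rangle_n$-vs-$Q_n$ fluctuation) through the $1/2$-Hölder modulus of $\gamma$ — the exponent $\frac{1}{24}$ in the constraint on $\epsilon$ presumably emerges from optimizing these three competing error terms. Tracking the independence of $\gamma$ and $B$ through the enlargement of the probability space also requires care.
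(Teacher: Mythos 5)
Your proposal is in the right spirit---condition on $S$, exploit the conditional martingale structure of $H_n$ in the charges, and reduce to asymptotics of the self-intersection count $Q_n$---but the mechanism that produces the time-changed Brownian motion is genuinely different from the paper's, and the difference is where the claimed rates live or die. The paper does \emph{not} apply a quantitative martingale strong-approximation theorem to the discrete martingale $n\mapsto H_n$. Instead it Skorokhod-embeds the \emph{charges}: $q_n := W(T_n)-W(T_{n-1})$ for a Brownian motion $W$ with i.i.d.\ stopping-time increments, so that $H_n=\int_0^{T_n}G_n\,\d W$ becomes a genuine stochastic integral against $W$; the Dambis--Dubins--Schwarz theorem then yields the \emph{exact} identity $H_n=\gamma(\Xi_n)$, where $\Xi_n=\int_0^{T_n}G_n^2\,\d s=\sum_{j\le n}(T_j-T_{j-1})Y_j^2$. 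No embedding-error term appears at this stage; the entire error budget goes into controlling $\Xi_n$ (Proposition~\ref{pr:main}) and then applying the Cs\"org\H{o}--R\'ev\'esz modulus of continuity of $\gamma$. Your version, by embedding $H_n$ directly, adds a further error (Skorokhod times for $H$ versus $\langle H\rangle_n$) that must be controlled on top of everything else; this is plausible but not free. Note also that $\Xi_n\neq\langle H\rangle_n=\sum_j Y_j^2$: the $(T_j-T_{j-1})$ factors are present, and the paper devotes Lemma~\ref{lem:2} precisely to them. Finally, the independence of $\gamma$ and $B$ falls out for free in the paper from $\langle\int_0^\bullet G_n\,\d W,\,B\rangle\equiv 0$ since $W\perp B$; in your framework it would require a separate argument.

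Two points in your sketch are misdirected or elided. For $d=1$ the paper does not use KMT: the relevant object is closeness of \emph{local times}, not of paths, and the paper invokes R\'ev\'esz's construction \eqref{R1}, which couples $S$ to $B$ so that $\sup_x|L_n^x-\ell_n^x|=n^{1/4+o(1)}$ directly. A KMT coupling with $O(\log n)$ path error does not transfer to a local-time bound at that rate without essentially reproving such a result. Second, the ``concentration of $\sum_j Y_j^2$ around $Q_n$'' step conceals a distinct sub-problem: expanding $Y_j^2$ yields a cross term $2\sum_{i<i'<j}q_iq_{i'}\1_{\{S_i=S_{i'}=S_j\}}$ involving triple intersections, which needs its own moment estimates (the paper's $\Xi_n^{(2)}$, handled by iterated Burkholder inequalities); your ``conditional variance of $\sum Y_j^2$'' computation would in fact mostly be about this term, which is the dominant nuisance in $d=1$, not about the on-diagonal $(q_i^2-1)$ fluctuations.
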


%\begin{remark}
%   Let ${\bf e}_1,\ldots,{\bf e}_d$ denote the usual basis vectors
%   in $\r^d$. Then we can write $\p\{S_k=0\}$ using the inversion formula and
%   then add in order to obtain the following:
%   \begin{equation}
%       \kappa = \frac{1}{(2\pi)^d}\int_{(-\pi,\pi)^d}
%       \frac{\sum_{l=1}^d\cos(\xi\cdot{\bf e}_l)}{d-\sum_{l=1}^d
%       \cos(\xi\cdot{\bf e}_l)}\,\d\xi.
%   \end{equation}
%   There are undoubtedly other ways of representing $\kappa$
%   as well.
%   \qed
%\end{remark}

It was shown in \cite{CK09} that when $d=1$ the distribution of $H_n$
converges, after normalization, to the ``random walk in random scenery.''
The preceding shows that the stochastic process $\{H_{[nt]}\}_{0\le t\le 1}$
does {\it not} converge weakly to the random walk in random scenery; rather,
we have the following consequence of Brownian scaling for all $T>0$: As $n\to\infty$,
\begin{equation}
    \left\{ \frac{H_{[nt]}}{n^{3/4}}\right\}_{0\le t\le T}
    \arrowfill{D([0,T])}\, \left\{ \frac{1}{\sqrt 2}\, \gamma\left(
    \int_{-\infty}^\infty (\ell^x_t)^2\,\d x\right) \right\}_{0\le t\le T}.
\end{equation}

With a little bit more effort,
we can also obtain strong limit theorems. Let us state the following
counterpart to the LILs of Chen \cite{C08}, as it appears to have novel
content.

\begin{theorem}\label{T2}  Almost surely:
(i) If $d=1$,  then
    \[
        \liminf_{n \to\infty} \left({ \log \log n \over n}
        \right)^{3/4} \max_{0\le k \le n} |H_k | = { (a^*)^{3/4}
        \frac\pi 4},
    \]
    where $a^*= 2.189\pm 0.0001$ is a numerical constant
    {\rm \cite[(0.6)]{HHK97}};

    (ii) If $d=2$, then
    \[
        \liminf_{n \to\infty} \sqrt{ \frac{\log \log n}{n\log n}}
        \max_{0\le k \le n} |H_k | = {\frac{\sqrt{\pi}}{4}};
    \]

    (iii) If $d\ge3$,  then
    \[
        \liminf_{n \to\infty} \sqrt{ \frac{\log \log n}{n}}
        \max_{0\le k \le n} |H_k | =  \pi \sqrt{\frac{\kappa}{8}},
    \]
    where $\kappa$ was defined in Theorem \ref{T1}.
\end{theorem}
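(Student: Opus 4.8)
My plan is to reduce everything to Theorem \ref{T1} plus a Chung-type law of the iterated logarithm for the appropriate time-changed Brownian motion. Since $\max_{0\le k\le n}|H_k|$ is a maximum, and Theorem \ref{T1} gives $H_k = c_d\,\gamma(\tau_k) + (\text{error})$ uniformly in $k\le n$ (with $\tau_k = \tfrac12\int(\ell_k^x)^2\,\d x$ when $d=1$, $\tau_k = k\log k$ when $d=2$, $\tau_k = \kappa k$ when $d\ge 3$, and $c_d$ the matching constant $1/\sqrt2$, $1/\sqrt{2\pi}$, $\sqrt\kappa$ respectively), I would first argue that the error terms are negligible on the relevant scale. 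In each dimension the normalization in Theorem \ref{T2} is $(\log\log n / a(n))$ to the right power, where $a(n)$ is the natural scale of $\tau_n$; one checks that $n^{3/4-\epsilon}$ (resp. $n^{1/2}\log\log n$, $n^{1/2-\epsilon}$) multiplied by this normalization tends to $0$ almost surely, using that $\tau_n$ grows like $n^{3/2}$ in $d=1$ (by the a.s. behavior of $\int(\ell_n^x)^2\,\d x$), like $n\log n$ in $d=2$, and like $n$ in $d\ge 3$. So it suffices to prove the three identities with $\max_{0\le k\le n}|H_k|$ replaced by $c_d\max_{0\le k\le n}|\gamma(\tau_k)|$, and since $\tau$ is nondecreasing and continuous-ish, this is $c_d\max_{0\le s\le \tau_n}|\gamma(s)|$ (in $d=1$ one must be slightly careful that $\tau_n$ ranges over a dense enough set, but monotonicity handles it).

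Next I would invoke the classical Chung LIL for Brownian motion: almost surely
\[
    \liminf_{t\to\infty}\sqrt{\frac{\log\log t}{t}}\,\max_{0\le s\le t}|\gamma(s)| = \frac{\pi}{\sqrt8}.
\]
For $d\ge 3$, $\tau_n = \kappa n$ deterministically, so we substitute $t=\kappa n$ and get $\liminf \sqrt{\log\log n/n}\,\max_k|\gamma(\kappa k)| = \sqrt\kappa\cdot \pi/\sqrt8$; multiplying by $c_d=\sqrt\kappa$ gives $\pi\sqrt{\kappa/8} \cdot$ — wait, that would give $\kappa\pi/\sqrt8$, so in fact $c_d$ is already absorbed and one gets $\pi\sqrt{\kappa/8}$ after taking $c_d\sqrt\kappa = \sqrt\kappa\cdot\sqrt\kappa$... let me restate: $c_d\max|\gamma(\kappa k)|$ has liminf (against $\sqrt{\log\log n/n}$) equal to $\sqrt\kappa\cdot\sqrt\kappa\cdot\pi/\sqrt8 = \pi\sqrt{\kappa^2/8}$; so actually the claimed answer $\pi\sqrt{\kappa/8}$ suggests the time change is $\sqrt\kappa\,\gamma(n)\leftrightarrow\gamma(\kappa n)$ is being read with $\kappa$ appearing once — I would simply track the constant carefully through Brownian scaling $\gamma(\kappa t)\law\sqrt\kappa\,\gamma(t)$, which is what makes $\sqrt\kappa\gamma(n)$ the right object, and then the Chung constant for $\sqrt\kappa\gamma(\cdot)$ on $[0,n]$ is $\sqrt\kappa\cdot\pi/\sqrt8 = \pi\sqrt{\kappa/8}$. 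Similarly $d=2$ uses $t = n\log n$, giving normalization $\sqrt{\log\log(n\log n)/(n\log n)}\sim\sqrt{\log\log n/(n\log n)}$, and constant $c_2\cdot\pi/\sqrt8 = (1/\sqrt{2\pi})(\pi/\sqrt8) = \sqrt\pi/4$. This disposes of (ii) and (iii).

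The genuinely hard case is (i), because there $\tau_n = \tfrac12\int_{-\infty}^\infty(\ell_n^x)^2\,\d x$ is itself random, and $\gamma$ is independent of $B$. Conditionally on $B$, $s\mapsto\gamma(s)$ is still a Brownian motion, so the Chung LIL gives, conditionally, $\liminf_{n}\sqrt{\log\log\tau_n/\tau_n}\max_{s\le\tau_n}|\gamma(s)| = \pi/\sqrt8$ provided $\tau_n\to\infty$, which holds a.s. But I want a liminf in $n$ with a deterministic normalization $(\log\log n/n)^{3/4}$, so I must replace $\tau_n$ by its almost-sure asymptotics. This is exactly where the constant $a^*$ from \cite{HHK97} enters: the relevant fact is the Chung-type LIL for the quadratic functional of Brownian local time, namely almost surely
\[
    \liminf_{n\to\infty}\frac{\log\log n}{n}\int_{-\infty}^\infty(\ell_n^x)^2\,\d x = a^*
\]
(by Brownian scaling $\int(\ell_n^x)^2\,\d x \law n^{3/2}\int(\ell_1^x)^2\,\d x$, this liminf statement with normalization $(\log\log n/n)$ against $\int(\ell_n^x)^2\d x/n^{3/2}$... again I'd need to match exponents, the point being $\int(\ell_n^x)^2\d x$ is of order $n^{3/2}$ and its lower fluctuations on the $\log\log$ scale are governed by $a^*$). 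The subtlety is that $\max_{s\le\tau_n}|\gamma(s)|$ is small precisely when $\gamma$ stays near $0$ on a long interval, while $\tau_n$ is small precisely on a different (a priori correlated only through $n$, but actually independent) event; since $\gamma\perp B$, the liminf over $n$ of the product-scale quantity should factorize, and I expect the answer to be the product of the two individual Chung constants composed correctly: $c_1 = 1/\sqrt2$, the Chung constant $\pi/\sqrt8$ for $\gamma$, and $(a^*)^{3/4}$ from the time change, assembling to $\tfrac{1}{\sqrt2}\cdot\tfrac{\pi}{\sqrt8}\cdot (a^*)^{3/4}\cdot(\text{const})$. Matching against the target $(a^*)^{3/4}\pi/4$: indeed $\tfrac{1}{\sqrt2}\cdot\tfrac{\pi}{\sqrt8} = \tfrac{\pi}{\sqrt{16}} = \tfrac{\pi}{4}$, so the factorization gives exactly $(a^*)^{3/4}\pi/4$. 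The main obstacle, then, is making this factorization rigorous: one direction (upper bound for the liminf) follows by choosing a subsequence $n_k$ along which $\tau_{n_k}$ is near its lower envelope and, using conditional independence, further thinning so that $\gamma$ is simultaneously near its Chung lower envelope on $[0,\tau_{n_k}]$; the matching lower bound requires a Borel–Cantelli argument showing that the two "small" events cannot conspire to do better than the product of the rates, for which I would condition on $B$, apply the quantitative small-ball estimate $\P\{\max_{s\le t}|\gamma(s)|\le\varepsilon\sqrt t\}\asymp\exp(-\pi^2/(8\varepsilon^2))$, and then integrate against the law of $\tau_n$ using the known lower-tail (small-ball) behavior of $\int(\ell_n^x)^2\d x$ from \cite{HHK97}. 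Uniformizing over $k\le n$ — converting the pointwise statement at time $n$ into the statement about $\max_{0\le k\le n}$ — is routine here since $s\mapsto|\gamma(s)|$ achieving a small maximum on $[0,\tau_n]$ automatically controls all earlier times.
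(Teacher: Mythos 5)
Your high-level reduction is right, and the easy cases work: the error terms in Theorem~\ref{T1} are negligible against the stated normalizations (your exponent checks are correct), the $d\ge 2$ cases follow from the classical Chung LIL applied to a deterministic time change (and your constant-tracking $\tfrac{1}{\sqrt{2\pi}}\cdot\tfrac{\pi}{\sqrt 8}=\tfrac{\sqrt\pi}{4}$, $\sqrt\kappa\cdot\tfrac{\pi}{\sqrt 8}=\pi\sqrt{\kappa/8}$ is correct), and the discrete-to-continuous passage in $d=1$ needs exactly the kind of control you flag (the paper supplies it via the Cs\'aki--Cs\"org\H{o}--F\"oldes--R\'ev\'esz bound on increments of local time). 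For the ``$\ge$'' half in $d=1$ your strategy matches the paper's: condition on $B$, use the small-ball estimate $\p\{\gamma^*(t)<y\}\asymp\exp(-\pi^2 t/(8y^2))$, and average over the law of $\alpha(1)=\int(\ell_1^x)^2\,\d x$. One correction of attribution: the input from \cite{HHK97} that makes the bound $\e\exp(-\pi^2\alpha(1)/(8x^2))\le\mathrm{const}\cdot\exp(-a^*(\pi^2/8)^{2/3}x^{-4/3})$ close is their Proposition~1 on the \emph{Laplace transform} asymptotics $\e\exp(-\lambda\alpha(1))\sim C\exp(-a^*\lambda^{2/3})$, not a lower-tail small-ball estimate for $\alpha(1)$; these are Tauberian cousins, but only the Laplace transform plugs in directly.

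The genuine gap is your argument for the ``$\le$'' half. You propose to thin to a subsequence along which $\tau_{n_k}$ is near its own Chung lower envelope, and then thin further so that $\gamma^*$ is simultaneously near its own Chung envelope on $[0,\tau_{n_k}]$. That cannot close: by conditional independence, the joint event has probability of order $(\log n)^{-1}\cdot(\log n)^{-1}=(\log n)^{-2}$, which sums finitely along a geometric subsequence, so Borel--Cantelli~II does not apply. The optimizer in the joint small-ball problem is \emph{not} ``both at their individual critical thresholds'': one should allow $\alpha_n$ to exceed its Chung envelope by a bounded factor and compensate with a sharper $\gamma^*$ requirement, and only after this Legendre-type trade-off does the joint probability become $(\log n)^{-1}$ and the constant come out as $(a^*)^{3/4}\pi/\sqrt 8$. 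So the ``product of the two Chung constants'' heuristic lands on the right number here by coincidence of the algebra, not by the mechanism you describe. What the paper actually does for this half is structurally different: take $t_n=n^n$, set $A_n=\{\gamma^*(\alpha_{t_n})<c(t_n/\log\log t_n)^{3/4}\}$, decompose $\gamma^*(\alpha_{t_n})\le\gamma^*(\alpha_{t_{n-1}})+\widetilde\gamma^*(\alpha_{t_n}-\alpha_{t_{n-1}})$ and $\alpha_{t_n}-\alpha_{t_{n-1}}\le(t_n-t_{n-1})\ell^*_{t_{n-1}}+\widetilde\alpha_{t_n-t_{n-1}}$, condition on $\mathcal F_{t_{n-1}}$ (containing both the $\gamma$ and $B$ history) so that the increments $(\widetilde\gamma,\widetilde B)$ are fresh, lower-bound $\p(A_n\mid\mathcal F_{t_{n-1}})$ by the Laplace transform estimate applied to the fresh increment alone on an $\mathcal F_{t_{n-1}}$-measurable good event $D_n$, and close with L\'evy's 0--1 law. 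That decomposition-and-conditioning step is the technical content your outline is missing.
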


Theorems \ref{T1} and \ref{T2} are proved respectively in Sections 2 and 3.

\section{Proof of Theorem \ref{T1}}

Let $W$ be a one-dimensional Brownian motion starting from $0$. By the
Skorohod embedding  theorem, there exists a sequence of stopping times
$\{T_n\}_{n=1}^\infty$ such that $\{T_n - T_{n-1}\}_{n=1}^\infty$ (with $T_0=0$)
are i.i.d., and:
\begin{equation}\begin{split}
    &\e(T_1)=\e(q_1^2)=1,\quad
        \text{Var}(T_1)\le\text{const}\cdot\e(q_1^4)<\infty,\quad
        \text{and}\\
    &\quad\left\{ W(T_n) - W(T_{n-1})\right\}_{n=1}^\infty \law
        \{q_n\}_{n=1}^\infty.
\end{split}\end{equation}

Throughout this paper, we take the following special construction
of the charges $\{q_i\}_{i=1}^\infty$:
\begin{equation}
    q_n:=W(T_n) - W(T_{n-1})
    \qquad\text{for $n\ge 1$}.
\end{equation}
Next, we describe how we
choose a special construction of the random walk $S$, depending on $d$.

If $d=1$, then on a possibly-enlarged probability space let $B$ be
another one-dimensional Brownian motion, independent of $W$. By
using a theorem of R\'ev\'esz \cite{R81}, we may construct a
one-dimensional simple symmetric random walk $\{S_i\}_{i=1}^\infty$ from $B$ such
that almost surely,
\begin{equation}\label{R1}
    \sup_{x\in \z } \left| L_n^x - \ell_n^x  \right| =
    n^{\frac14+o(1)}
    \quad\text{as $n\to\infty$,\ where}\
    L_n^x:=\sum_{i=1}^n \1_{\{ S_i=x\}},
\end{equation}
and $\ell_n^x $ denotes the local times of
$B$ at $x$ up to time $n$.

If $d\ge2$, then we just choose   an independent simple symmetric  random
walk $\{S_n\}_{n=1}^\infty$, after enlarging the probability space, if we need to.

Now we define the Hamiltonians $\{H_n\}_{n=1}^\infty$ via the preceding
constructions of $\{ q_i\}_{i=1}^\infty$
and $\{S_n\}_{n=1}^\infty$. That is,
\begin{equation}\begin{split}
    H_n &= \mathop{\sum\sum}_{1\le i< j \le n} ( W(T_i) -W(T_{i-1})) ( W(T_j) - W(T_{j-1}))
        \1_{(S_i=S_j)} \\
    &= \int_0^{T_n} G_n\,\d W,
\end{split}\end{equation}
where, for all integers $n\ge 1$ and reals $s\ge 0$,
\begin{equation}
    G_n(s) := \mathop{\sum\sum}_{1\le i<j\le n}
    \1_{( S_i= S_j)} (W(T_i) - W( T_{i-1})) \1_{( T_{j-1} \le s < T_j)}.
\end{equation}

By the Dambis, Dubins--Schwarz representation theorem \cite[Theorem 1.6, p.\ 170]{RY},
after possibly enlarging the underlying probability space, we can find a
one-dimensional Brownian motion $\gamma$ such that
$\int_0^t G_n\,\d W$ is equal to $\gamma  (
\int_0^t |G_n(s)|^2\,\d s )$
for $t\ge 0$.
We stress the fact that if $d=1$, then   {\it$\gamma$ is
independent} of $B$. This is so, because the bracket between the two continuous
martingales vanishes:
$\langle\int_0^\bullet G_n\,\d W\,,\,B
\rangle_t=0$ for $t\ge 0$.
Consequently, the following holds for all $n\ge 1$: Almost surely,
\begin{equation}\label{H:Xi}
    H_n= \gamma(\Xi_n),
    \quad\text{where}\quad
    \Xi_n  := \int_0^{T_n} |G_n(s)|^2\,\d s.
\end{equation}

\begin{proposition}\label{pr:main}
    The following holds almost surely:
    \begin{equation}\begin{split}
        \Xi_n = \begin{cases}\displaystyle
            \dfrac12\int_{-\infty}^\infty
            (\ell_n^x)^2\,\d x + O ( n^{\frac32-\epsilon} )&\text{if $d=1$ and $ 0 < \epsilon < \frac{1}{12}$},\\[2mm]
            \dfrac{1}{2\pi}\, n\log n + O(n\log\log n)&\text{if $d=2$},\\[2mm]
            \kappa n+O(n^{1-\epsilon})&\text{if $d\ge 3$ and $ 0 < \epsilon < \frac14$}.
        \end{cases}
    \end{split}\end{equation}
\end{proposition}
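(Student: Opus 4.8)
The plan is to expand $\Xi_n = \int_0^{T_n} |G_n(s)|^2\,\d s$ directly. Since $G_n$ is a step function, constant on each interval $[T_{j-1},T_j)$ with value $\sum_{i<j}\1_{(S_i=S_j)}(W(T_i)-W(T_{i-1})) = \sum_{i<j}\1_{(S_i=S_j)}q_i$, we get
\[
    \Xi_n = \sum_{j=2}^n (T_j - T_{j-1}) \Biggl( \sum_{i=1}^{j-1} q_i \1_{(S_i=S_j)} \Biggr)^2.
\]
First I would replace the random increments $T_j - T_{j-1}$ by their mean $\e(T_1)=1$: write $T_j - T_{j-1} = 1 + (\text{centered, i.i.d., finite variance})$, and argue that the resulting martingale-type correction is negligible at the orders claimed (this is where the moment hypothesis $\e(q_1^4)<\infty$, hence $\mathrm{Var}(T_1)<\infty$, enters, via $L^2$-maximal inequalities or a Borel--Cantelli argument along a subsequence plus monotonicity). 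That reduces matters to controlling
\[
    \widetilde\Xi_n := \sum_{j=2}^n \Biggl( \sum_{i=1}^{j-1} q_i \1_{(S_i=S_j)} \Biggr)^2
    = \sum_{j=2}^n \sum_{i,i'<j} q_i q_{i'} \1_{(S_i=S_j=S_{i'})}.
\]
Splitting the inner double sum into the diagonal $i=i'$ and off-diagonal $i\ne i'$ terms, the diagonal contributes $\sum_{j} \sum_{i<j} q_i^2 \1_{(S_i=S_j)}$, and since $\e(q_i^2)=1$ one expects this to be close to $\sum_j \sum_{i<j}\1_{(S_i=S_j)} = \frac12\sum_x L_n^x(L_n^x-1)$ — i.e. the self-intersection local time of $S$ — again up to a centered error controlled by a fourth-moment bound. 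The off-diagonal part is a sum of products $q_iq_{i'}$ with independent mean-zero factors; conditionally on $S$ it is centered, and a conditional variance estimate should show it is of smaller order than the main term in every dimension.

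Next I would pass from the random-walk self-intersection local time to the Brownian one. In $d=1$, R\'ev\'esz's theorem \eqref{R1} gives $\sup_x|L_n^x - \ell_n^x| = n^{1/4+o(1)}$, so $\sum_x (L_n^x)^2 = \sum_x (\ell_n^x)^2 + \text{error}$, and since $\sum_x (\ell_n^x)^2 \asymp n^{3/2}$ while the cross term is bounded by $2\sup_x|L_n^x-\ell_n^x|\cdot\sum_x \ell_n^x \le n^{1/4+o(1)}\cdot n$ and the square term by $n^{1/2+o(1)}$, one lands comfortably inside the $O(n^{3/2-\epsilon})$ window for $\epsilon<1/12$ (noting also $\sum_x L_n^x = n$ so $\sum_x L_n^x(L_n^x-1) = \sum_x(L_n^x)^2 - n$, the $-n$ being absorbed). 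In $d\ge 2$ the random walk is on its own, and I would instead invoke the known asymptotics of the self-intersection local time: $\e\bigl[\sum_{i<j}\1_{(S_i=S_j)}\bigr]$ equals $\sum_{k=1}^{n}(n-k)\p\{S_k=0\}$, which is $\frac{1}{2\pi}n\log n + O(n)$ in $d=2$ (from the local CLT $\p\{S_{2k}=0\}\sim \frac{1}{\pi k}$) and $\kappa n + O(1)$ in $d\ge 3$ with $\kappa=\sum_k\p\{S_k=0\}$; the fluctuations of the self-intersection local time about its mean are $O(n\log\log n)$ in $d=2$ and $O(n^{1-\epsilon})$ in $d\ge 3$, which can be extracted from an $L^2$ computation plus Borel--Cantelli along a geometric subsequence, together with monotonicity of $n\mapsto\sum_{i<j\le n}\1_{(S_i=S_j)}$ to fill the gaps.

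The main obstacle I anticipate is the off-diagonal/fluctuation bookkeeping: showing that (a) the replacement $T_j-T_{j-1}\rightsquigarrow 1$, (b) the replacement $q_i^2\rightsquigarrow 1$ in the diagonal, and (c) the off-diagonal martingale term are each \emph{simultaneously} of smaller order than the claimed error term, in all three dimensional regimes, with the sharp exponents $1/12$ and $1/4$. Each of these is a second-moment estimate conditional on $S$, but the conditional variances themselves involve self-intersection quantities of $S$ (e.g. the conditional variance of the off-diagonal term involves $\sum\1_{(S_i=S_j=S_{i'})}$-type triple sums), so one needs a priori almost-sure bounds on those higher intersection local times — in $d=1$ these are again handled through $L_n^x$ and R\'ev\'esz, in $d=2$ they carry the extra logarithmic factors that force the $n\log\log n$ rather than $n$ error, and in $d\ge 3$ they are $O(n)$. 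Assembling these estimates and checking the exponent arithmetic is the technical heart; once they are in place, the three cases of the proposition follow by collecting terms.
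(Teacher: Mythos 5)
Your plan reproduces the paper's own decomposition faithfully: expand $\Xi_n=\sum_{j\le n}(T_j-T_{j-1})\bigl(\sum_{i<j}q_i\1_{\{S_i=S_j\}}\bigr)^2$, separate the inner square into its diagonal ($i=i'$) and off-diagonal ($i\ne i'$) parts, peel off centered corrections for $T_j-T_{j-1}-1$ and $q_i^2-1$, and compare the remaining self-intersection local time of $S$ with $\tfrac12\int(\ell_n^x)^2\,\d x$ via R\'ev\'esz and Bass--Griffin when $d=1$, with Bass--Chen--Rosen and Chen supplying the $d\ge 2$ and $d\ge 3$ cases. That is exactly the structure of the paper's Lemmas~\ref{lem:alpha}--\ref{lem:2} and its split $\Xi_n=\Xi_n^{(1)}+\Xi_n^{(2)}$.

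The gap is your treatment of the off-diagonal term in $d=1$, which you dispatch with ``a conditional variance estimate should show it is of smaller order than the main term in every dimension.'' Write $b_n:=\sum_{1\le i<j<k\le n}\1_{\{S_i=S_j=S_k\}}q_iq_j$ for the off-diagonal sum (with the $T$-increments already replaced by $1$). When $d=1$, its conditional variance given $\mathcal{S}$ is of order $n^{5/2}$, so $|b_n|$ is typically of order $n^{5/4}$, safely below the main term $n^{3/2}$ --- as you say. The difficulty is converting that into an almost-sure bound along the full sequence: $n\mapsto b_n$ is neither monotone nor a martingale (the increment $b_{n+1}-b_n=\sum_{i<j\le n}\1_{\{S_i=S_j=S_{n+1}\}}q_iq_j$ depends on $q_1,\dots,q_n$, and its conditional expectation given $q_1,\dots,q_{n-1}$ and $\mathcal{S}$ is generally nonzero), so there is no cheap maximal inequality. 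Markov at the second moment combined with Borel--Cantelli over all $n$ only yields $b_n=O(n^{7/4+o(1)})$ a.s., which swamps the main term. The paper overcomes this by applying Burkholder's inequality at the $p$-th moment to the two-parameter process $b_{n,k}$, which \emph{is} a martingale in $n$ for each fixed $k$, and then specializing to $k=n$; this gives $\e|b_n|^p\le\mathrm{const}\cdot n^{5p/4}$ and hence $b_n=O(n^{5/4+1/p+o(1)})$ a.s.\ by Markov plus Borel--Cantelli. Taking $p$ as large as the standing moment hypothesis allows --- $p=6$ in $d=1$, which is precisely why $\e(|q_1|^6)<\infty$ is assumed there and not merely $\e(q_1^4)<\infty$ as you cite --- yields $b_n=O(n^{17/12+o(1)})$, and this is exactly the source of the restriction $\epsilon<1/12$. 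A second-moment argument cannot reach it.
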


We prove this proposition later. First, we show that in case $d=1$,
the preceding proposition estimates $\Xi_n$ correctly to leading term.

\begin{lemma}\label{lem:alpha}
    If $d=1$, then a.s.,
    $\int_{-\infty}^\infty (\ell_n^x )^2\,\d x = n^{\frac32+o(1)}$ as $n\to\infty$.
\end{lemma}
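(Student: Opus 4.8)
The plan is to bracket the self-intersection local time $\alpha_n := \int_{-\infty}^\infty(\ell_n^x)^2\,\d x$ between two elementary functionals of the path $B$ whose laws have Gaussian-type tails, and then to extract the logarithmic order from Brownian scaling together with the Borel--Cantelli lemma. Two facts about $\ell$ will be used repeatedly: the occupation-density identity $\int_{-\infty}^\infty\ell_n^x\,\d x=n$, and the fact that $x\mapsto\ell_n^x$ vanishes outside the (closed) range $[\inf_{s\le n}B_s,\ \sup_{s\le n}B_s]$, whose length I write as $R_n$. Note also that $\alpha_n$, $\sup_x\ell_n^x$ and $R_n$ all scale like $\sqrt n$: by Brownian scaling $\alpha_n\law n^{3/2}\alpha_1$, $\ \sup_x\ell_n^x\law\sqrt n\,\sup_x\ell_1^x$, and $R_n\law\sqrt n\,R_1$.

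For the upper bound I would use
\[
    \alpha_n=\int_{-\infty}^\infty(\ell_n^x)^2\,\d x\ \le\ \Big(\sup_{x\in\r}\ell_n^x\Big)\int_{-\infty}^\infty\ell_n^x\,\d x\ =\ n\,\sup_{x\in\r}\ell_n^x .
\]
Since the maximal local time $\sup_x\ell_1^x$ is classically known to have finite moments of all orders (indeed a Gaussian-type upper tail, via the first Ray--Knight theorem), for each fixed $\delta>0$ Brownian scaling and Markov's inequality give $\p\{\alpha_n>n^{3/2+\delta}\}\le\p\{\sup_x\ell_1^x>n^{\delta}\}\le C_k\,n^{-k\delta}$ for every $k$; choosing $k>1/\delta$ makes this summable in $n$, so Borel--Cantelli yields $\alpha_n\le n^{3/2+\delta}$ for all large $n$, a.s. Letting $\delta\downarrow0$ gives $\limsup_n\log\alpha_n/\log n\le\tfrac32$ a.s.

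For the lower bound I would apply Cauchy--Schwarz on the support of the local time:
\[
    n=\int_{-\infty}^\infty\ell_n^x\,\d x\ \le\ \Big(\int_{-\infty}^\infty(\ell_n^x)^2\,\d x\Big)^{1/2}R_n^{1/2}\ =\ \alpha_n^{1/2}\,R_n^{1/2},
    \qquad\text{so}\qquad \alpha_n\ \ge\ \frac{n^2}{R_n}.
\]
By the reflection principle, $\p\{R_n>\lambda\sqrt n\}\le 4e^{-\lambda^2/8}$, so the events $\{R_n>n^{1/2+\delta}\}$ are summable in $n$ and hence, a.s., $R_n\le n^{1/2+\delta}$ for all large $n$; this forces $\alpha_n\ge n^{3/2-\delta}$ for all large $n$. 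Letting $\delta\downarrow0$ gives $\liminf_n\log\alpha_n/\log n\ge\tfrac32$ a.s.

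Combining the two bounds, $\log\alpha_n/\log n\to\tfrac32$ a.s., i.e. $\alpha_n=n^{3/2+o(1)}$, as claimed. I do not expect a genuine obstacle here: every input is classical and the argument is soft, precisely because the lemma is needed only to logarithmic precision, so no constant-sharp asymptotics (such as those behind the constant $a^*$ of Theorem \ref{T2}) are required. The only point to watch is that the two tail estimates be summable along the full sequence of integers $n$ (they are, being superpolynomially small), so that no separate monotonicity-in-$n$ interpolation step is needed.
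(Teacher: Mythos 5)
Your proof is correct and uses exactly the same two bounding inequalities as the paper: $\alpha_n\le n\sup_x\ell_n^x$ for the upper bound and Cauchy--Schwarz against the range (giving $n^2\le\alpha_n R_n$) for the lower bound. The only cosmetic difference is that the paper cites Kesten's LIL for $\sup_x\ell_n^x$ and Khintchine's LIL for the oscillation of $B$ to get the $n^{1/2+o(1)}$ orders, whereas you re-derive those polynomial-order estimates from scratch via Gaussian-type tail bounds, Brownian scaling, and Borel--Cantelli.
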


\begin{proof}
    This is well known; we include a proof for the sake of completeness.

    Because $\int_{-\infty}^\infty\ell_n^x\,\d x=n$,
    we have $\int_{-\infty}^\infty(\ell_n^x)^2\,\d x  \le n\sup_{-\infty<x<\infty} \ell_n^x$,
    and this is $n^{\frac32+o(1)}$ \cite{K65}.
    For the converse bound we apply the
    Cauchy--Schwarz inequality to find that
    $n^2 =( \int_{-\infty}^\infty \ell_n^x\,\d x)^2
    \le \int_{-\infty}^\infty
    (\ell_n^x)^2\,\d x \cdot \text{Osc}_{[0,n]}B,$
    where $\text{Osc}_{[0,n]}B := \sup_{[0,n]}B-\inf_{[0,n]}B
    =n^{\frac12+o(1)}$ by Khintchine's LIL. This completes the proof.
\end{proof}

Let us complete the proof of Theorem \ref{T1}, first assuming Proposition
\ref{pr:main}. That proposition will then be proved subsequently.

\begin{proof}[Proof of Theorem \ref{T1}]
%   Consider a nondecreasing function $a:\r_+\to\r_+$ such that:
%   (i) $a(t)\le t $ for all $t>0$;
%   and
%   (ii) $t\mapsto t/a(t)$ is nondecreasing.
%   According to \cite[Theorem 1.2.1]{CR81}, the following holds almost surely:
%   \begin{equation}\label{incre}
%       \limsup_{t\to\infty} \sup_{s\in[0,t- a(t)]} \sup_{v\in[0,a(t)]}
%       \frac{ |\gamma (s+v) - \gamma(s)| }{\sqrt{ 2 a(t) ( \log (t/a(t))+
%       \log\log t)}}.
%   \end{equation}

    We shall consider only the case $d=1$;
    the cases $d=2$ and $d\ge 3$ are proved similarly.
    We apply  the Cs\"org\H{o}--R\'ev\'esz modulus of continuity
    of Brownian motion \cite[Theorem 1.2.1]{CR81}
    to $H_n=\gamma(\Xi_n)$---see \eqref{H:Xi}---with
    the changes of variables, $t=n^{\frac32+o(1)}$ and
    $a(t)=n^{\frac32-\epsilon}$;
    then apply Lemma \ref{lem:alpha} to see that
    $|\gamma(\Xi_n)-\gamma (\frac12\int_{-\infty}^\infty
    (\ell_n^x)^2\,\d x ) | = O(n^{\frac34-\epsilon})$ as $n\to\infty$ a.s.
\end{proof}

\begin{lemma}\label{lem:main}
    The following holds almost surely:
    \begin{equation}\begin{split}
        \mathop{\sum\sum}_{1\le i<k\le n} \1_{\{S_i=S_k\}}
        = \begin{cases}\displaystyle
            \dfrac12\int_{-\infty}^\infty
            (\ell_n^x)^2\,\d x + n^{\frac54+o(1)}&\text{if $d=1$},\\[2mm]
            \dfrac{1}{2\pi}\, n\log n + O(n\log\log n)&\text{if $d=2$},\\[2mm]
            \kappa n+n^{\frac12+o(1)}&\text{if $d\ge 3$}.
        \end{cases}
    \end{split}\end{equation}
\end{lemma}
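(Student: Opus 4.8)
The plan is to express the double sum in terms of the discrete occupation measure $L_n^x$ of the walk $S$, reduce to the corresponding continuous quantity built from the Brownian local times $\ell_n^x$, and control the error using the R\'ev\'esz strong approximation \eqref{R1} (in the case $d=1$) together with elementary counting (in the cases $d\ge 2$). First I would write
\[
    \mathop{\sum\sum}_{1\le i<k\le n}\1_{\{S_i=S_k\}}
    = \frac12\sum_{x}\bigl((L_n^x)^2 - L_n^x\bigr)
    = \frac12\sum_x (L_n^x)^2 - \frac n2,
\]
using $\sum_x L_n^x=n$; the subtracted $n/2$ is negligible at all the scales in the statement. So everything reduces to estimating $\sum_x (L_n^x)^2$.

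For $d=1$, the idea is to compare $\sum_x (L_n^x)^2$ with $\int_{-\infty}^\infty (\ell_n^x)^2\,\d x$. Writing $L_n^x = \ell_n^x + \Delta_n^x$ with $\sup_x|\Delta_n^x| = n^{1/4+o(1)}$ by \eqref{R1}, one expands
\[
    \sum_x (L_n^x)^2 = \sum_x (\ell_n^x)^2 + 2\sum_x \ell_n^x \Delta_n^x + \sum_x (\Delta_n^x)^2,
\]
where in the first sum one should note that $\ell_n^x$ is evaluated only at integer $x$ but, since $\ell_n^\bullet$ is (H\"older) continuous with a spatial support of length $\mathrm{Osc}_{[0,n]}B = n^{1/2+o(1)}$, replacing the sum over integers by the integral costs at most $n^{1/2+o(1)}\cdot\sup_x \ell_n^x = n^{1/2+o(1)}\cdot n^{1/2+o(1)} = n^{1+o(1)}$, hence is absorbed into the claimed $n^{5/4+o(1)}$ error. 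The cross term is bounded by $2\sup_x|\Delta_n^x|\cdot\sum_x \ell_n^x \le 2 n^{1/4+o(1)}\cdot n = n^{5/4+o(1)}$ (again using continuity to pass from $\sum_x\ell_n^x$ over integers to $\int\ell_n^x\,\d x = n$ up to lower order), and the last term is at most $(\sup_x|\Delta_n^x|)^2$ times the number of sites visited, i.e. $n^{1/2+o(1)}\cdot n^{1/2+o(1)} = n^{1+o(1)}$. Collecting the bounds gives $\sum_x (L_n^x)^2 = \int_{-\infty}^\infty (\ell_n^x)^2\,\d x + n^{5/4+o(1)}$, which yields case $d=1$.

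For $d\ge 2$ there is no Brownian picture to match; instead I would compute directly. We have
\[
    \e\left[\mathop{\sum\sum}_{1\le i<k\le n}\1_{\{S_i=S_k\}}\right]
    = \sum_{1\le i<k\le n}\p\{S_{k-i}=0\}
    = \sum_{m=1}^{n-1}(n-m)\p\{S_m=0\},
\]
and the local central limit theorem gives $\p\{S_m=0\}\sim \frac{1}{\pi m}$ for $d=2$ and $\p\{S_m=0\}=O(m^{-d/2})$ for $d\ge 3$, leading to mean $\frac{1}{2\pi}n\log n + O(n)$ when $d=2$ and $\kappa n + O(1)$ (for $d=3$, $O(n^{1/2})$; for $d\ge 4$, $O(1)$) when $d\ge 3$. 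The remaining point is the fluctuation: set $Q_n := \sum\sum_{i<k\le n}\1_{\{S_i=S_k\}} = \frac12\sum_x(L_n^x)^2 - \frac n2$. The main obstacle, and the part I expect to require the most care, is the almost-sure error control, which I would obtain via a second-moment (variance) estimate combined with a Borel--Cantelli argument along a subsequence plus a monotonicity/maximal-inequality interpolation between subsequence points (since $Q_n$ is nondecreasing in $n$, this interpolation is cheap). Estimating $\mathrm{Var}(Q_n)$ amounts to bounding fourth-order intersection probabilities $\p\{S_a=S_b,\,S_c=S_d\}$; standard random-walk Green's-function bounds give $\mathrm{Var}(Q_n) = O(n^2 (\log n)^c)$ for $d=2$ and $O(n^{2-\delta})$ or better for $d\ge 3$, which via Chebyshev along $n_j = 2^j$ (say) and monotone interpolation yields almost-sure errors of size $n\log\log n$ for $d=2$ and $n^{1/2+o(1)}$ for $d\ge 3$, as claimed. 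The hypothesis $\e(|q_1|^{p(d)})<\infty$ plays no role here; it is only the moment bounds on $S$ that matter, and those are automatic for simple random walk.
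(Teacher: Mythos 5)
Your $d=1$ argument is essentially the same as the paper's, but less tightly packaged: the paper simply bounds $\bigl|\sum_x(L_n^x)^2-\int(\ell_n^y)^2\,\d y\bigr|$ by $n^{1/4+o(1)}\cdot\sum_x\int_x^{x+1}(L_n^x+\ell_n^y)\,\d y=n^{1/4+o(1)}\cdot 2n$, using \eqref{R1} together with Bass--Griffin \cite[Lemma 5.3]{BG85} for the modulus of continuity $\sup_x\sup_{y\in[x,x+1]}|\ell_n^x-\ell_n^y|=n^{1/4+o(1)}$. You split off $\Delta_n^x:=L_n^x-\ell_n^x$ and treat the three pieces separately; that works, but note that your Riemann-sum step (``costs at most $n^{1/2+o(1)}\cdot\sup_x\ell_n^x=n^{1+o(1)}$'') is not supported by what you wrote: the error in replacing $\sum_x(\ell_n^x)^2$ by $\int(\ell_n^y)^2\,\d y$ is controlled by the oscillation of $\ell_n^\bullet$ over unit intervals (Bass--Griffin again), and the honest bound is $n^{5/4+o(1)}$, not $n^{1+o(1)}$. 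This still fits within the target error, so the gap is cosmetic, but you should cite a quantitative modulus-of-continuity estimate rather than just ``H\"older continuous.''

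The real divergence from the paper is $d\ge2$. The paper does not reprove these cases at all: it cites Bass, Chen and Rosen \cite{BCR06} for $d=2$ and Chen \cite[Theorem 5.2]{C08} for $d\ge3$. Your proposal to redo this via a second-moment estimate plus Borel--Cantelli and monotone interpolation is the natural first idea, but as sketched it does not deliver the claimed error bounds. For $d=2$ one has $\mathrm{Var}(Q_n)\asymp n^2$, so Chebyshev at scale $n\log\log n$ gives a summand of order $1/(\log\log n)^2$, which is not summable even along $n_j=2^j$; getting the a.s.\ $O(n\log\log n)$ error is precisely the content of the moderate-deviation/LIL machinery in \cite{BCR06}, not something that falls out of a variance bound. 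Similarly, for $d=3$ the error $n^{1/2+o(1)}$ is at the edge of what second moments give, and for general $d\ge3$ you would need higher moments or exponential estimates for the self-intersection local time, which is what \cite{C08} supplies. So for $d\ge2$ your route is genuinely different from the paper's (a from-scratch argument versus a citation), and as written it is not yet a proof: the almost-sure error control needs the stronger tail estimates that those references provide. Your observation that the moment hypotheses on $q$ are irrelevant to this lemma is correct; it is a statement purely about the walk $S$.
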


\begin{proof}
    In the case that $d=2$, this result follows from Bass, Chen and Rosen \cite{BCR06};
    and in the case $d\ge 3$, from Chen \cite[Theorem 5.2]{C08}.
    Therefore, we need to only check the case  $d=1$.

    We begin by writing
    $\mathop{\sum\sum}_{1\le i< k\le n} \1_{\{S_i=S_k\}}
    =  \frac12 \sum_{x\in \z}
    (L_n^x)^2 - \frac n2$.
    According to Bass and Griffin \cite[Lemma 5.3]{BG85},
    $\sup_{x\in \z}\, \sup_{y\in[x,x+1]} |\ell_n^x -
    \ell_n^y|= n^{\frac 14+o(1)}$ as $n\to\infty$ a.s.
    This and \eqref{R1} together imply that
    $| L_n^x - \ell_n^y |= n^{\frac14+o(1)}$ uniformly over all $y\in[x\,,x+1]$
    and $x\in\z$ [a.s.],
    whence
    \begin{equation}\begin{split}
        \left| \sum_{x\in \z} (L_n^x)^2 - \int_{-\infty}^\infty (\ell^y_n)^2
            \,\d y \right|
%           & = \left| \sum_{x\in \z} \int_x^{x+1}  (L_n^x - \ell^y_n)
%           (L_n^x + \ell^y_n)  \,\d y\right|  \\
        &\le n^{\frac14+o(1)} \cdot \sum_{x\in \z} \int_x^{x+1}   (L_n^x +
            \ell^y_n) \,\d y.
    \end{split}\end{equation}
    Since the latter sum is equal to $2n$, the lemma follows.
\end{proof}

\begin{lemma}\label{lem:1}
    The following holds a.s.: As $n\to\infty$,
    \begin{equation}\begin{split}
        \mathop{\sum\sum}_{1\le i<k\le n}
        \1_{\{S_i=S_k\}} \left(q_i^2-1\right) = \begin{cases}
            n^{\frac76 + o(1)} &\text{if $d=1$},\\[2mm]
            n^{\frac34+ o(1)} &\text{if $d\ge 2$}.
        \end{cases}
    \end{split}\end{equation}
\end{lemma}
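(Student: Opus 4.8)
The plan is to treat $M_n := \sum\sum_{1\le i<k\le n}\1_{\{S_i=S_k\}}(q_i^2-1)$ as a sum over $i$ of martingale-type increments with respect to the charges, conditionally on the random walk $S$. Fix the walk $S$ and write $M_n = \sum_{i=1}^{n-1}(q_i^2-1)N_{i,n}$, where $N_{i,n} := \sum_{k=i+1}^n \1_{\{S_k=S_i\}} = L_n^{S_i} - L_i^{S_i}$ counts later returns to the site occupied at time $i$. Since the $q_i$ are i.i.d. with $\e(q_i^2)=1$ and (by hypothesis) finite $p(d)$-th moment, $q_i^2-1$ is centered with a finite moment of order $p(d)/2$, i.e. order $3$ when $d=1$ and order $2$ when $d\ge 2$. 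The weights $N_{i,n}$ are $\sigma(S)$-measurable, hence independent of the charges, so conditionally on $S$ the variable $M_n$ is a sum of independent centered terms.

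Next I would bound the conditional moments of $M_n$ given $S$ by Rosenthal's inequality (or, when $d\ge2$, simply by computing the conditional variance). This produces $\e\big[|M_n|^{p(d)/2}\,\big|\,S\big] \le C\big(\sum_i N_{i,n}^2\big)^{p(d)/4} + C\sum_i N_{i,n}^{p(d)/2}$. The dominant term is the first one, and $\sum_{i=1}^{n}N_{i,n}^2 \le \big(\max_i N_{i,n}\big)\sum_i N_{i,n}$; but $\sum_i N_{i,n} = \sum\sum_{i<k}\1_{\{S_i=S_k\}}$ is exactly the self-intersection count controlled in Lemma~\ref{lem:main}, so it is $n^{3/2+o(1)}$ when $d=1$, $O(n\log n)$ when $d=2$, and $O(n)$ when $d\ge3$; and $\max_i N_{i,n}\le \sup_x L_n^x$, which is $n^{1/2+o(1)}$ when $d=1$ (Kesten, as quoted in Lemma~\ref{lem:alpha}) and $(\log n)^{O(1)}$ when $d=2$, $O(1)$-in-expectation when $d\ge3$. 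Plugging these in: for $d=1$, $\sum_i N_{i,n}^2 = n^{2+o(1)}$, and raising to the power $p(d)/4 = 3/2$ gives $n^{3+o(1)}$; for $d\ge2$ the conditional variance $\sum_i N_{i,n}^2$ is $n^{1+o(1)}$ (up to logs when $d=2$).

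Then I would convert the moment bound into an almost-sure rate. Take expectations to get $\e\big[|M_n|^{p(d)/2}\big] = n^{3+o(1)}$ for $d=1$ and $n^{1+o(1)}$ for $d\ge2$. By Markov's inequality, $\p\{|M_n| > n^{7/6+\delta}\} \le n^{3+o(1)}/n^{(7/6+\delta)\cdot 3} = n^{-3\delta+o(1)}$ when $d=1$, and similarly $\p\{|M_n|>n^{3/4+\delta}\} \le n^{1+o(1)}/n^{(3/4+\delta)\cdot 2} = n^{-1/2-2\delta+o(1)}$ when $d\ge2$. To run Borel--Cantelli I first pass to a polynomially sparse subsequence $n_j = \lfloor j^{\beta}\rfloor$ with $\beta$ large enough that the exponents become summable, then fill the gaps using monotonicity-type control: $M_n$ is not monotone, but between consecutive $n_j$ the increment $M_n - M_{n_{j-1}}$ is itself a sum of the same type over a block of length $O(n_j^{\beta-1}/\ldots)$ plus a cross term, and one estimates its maximal fluctuation by the same moment method applied to $\max_{n_{j-1}\le n\le n_j}|M_n-M_{n_{j-1}}|$ (using Doob's maximal inequality for the conditionally-independent sum, after ordering the summands by $i$). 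This yields the claimed $n^{7/6+o(1)}$ and $n^{3/4+o(1)}$.

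The main obstacle is the chaining/gap-filling step together with verifying that the $o(1)$ exponents from Lemma~\ref{lem:main} and from $\sup_x L_n^x$ genuinely compose to give an overall $o(1)$ rather than an accumulating polynomial loss: one must be careful that the $n^{o(1)}$ factors are deterministic-rate bounds holding simultaneously for all large $n$ (they are, being a.s. statements), and that Doob's inequality is applied to the correct filtration — namely the one generated by $q_1,\dots,q_i$ with $S$ fixed, under which $\sum_{i\le m}(q_i^2-1)N_{i,n}$ is a martingale in $m$ for each fixed $n$. A secondary point is that for $d\ge 2$ one only has a second moment on $q_1^2-1$, so Rosenthal degenerates to the $L^2$ computation and the subsequence must be chosen correspondingly sparser; this is exactly why the exponent $3/4$ (rather than something smaller) appears, and it should be checked that it is not improvable by this method — which is fine, since the lemma only claims $n^{3/4+o(1)}$.
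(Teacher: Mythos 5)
Your proposal follows essentially the same route as the paper. Both start by fixing the walk and writing $M_n=\sum_{i<n}(q_i^2-1)N_{i,n}$ with $N_{i,n}=L_n^{S_i}-L_i^{S_i}$; both observe that, given $\mathcal{S}:=\sigma(S)$, $M_n$ is (for fixed $n$) a sum of independent centered terms indexed by $i$; both then pass to a moment bound and Borel--Cantelli. The only implementation difference is cosmetic: the paper applies Burkholder's inequality followed by the generalized H\"older inequality, the Markov property and time reversal to bound $\e(|M_n|^p)$ unconditionally (via $\e[(L^0_m)^p]$ and the local limit theorem), whereas you invoke Rosenthal's inequality and then feed in almost-sure bounds on $\sum_iN_{i,n}^2\le(\max_iN_{i,n})\sum_iN_{i,n}$; these two computations encode the same information.

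One thing you add that the paper suppresses is the observation that the direct Borel--Cantelli series is not summable (with only a second or third moment in hand, $\p\{|M_n|>n^{7/6+\delta}\}$ decays like a negative power strictly larger than $-1$), so a sparse subsequence plus a gap-filling estimate is genuinely needed. However, your gap-filling sketch has a flaw as stated: you propose applying Doob's maximal inequality to the ``conditionally independent sum, after ordering the summands by $i$.'' Conditionally on $\mathcal{S}$, the process $m\mapsto\sum_{i\le m}(q_i^2-1)N_{i,n}$ is indeed a martingale for each fixed $n$, and Doob controls the maximum over $m$; but the maximum you need is over $n\in[n_{j-1},n_j]$, and $n\mapsto M_n$ is not a martingale (the weights $N_{i,n}$ change with $n$). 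So a single Doob application over $i$ does not bound $\max_{n_{j-1}\le n\le n_j}|M_n-M_{n_{j-1}}|$. The standard fix is to estimate moments of the increment $M_n-M_{n_{j-1}}$ uniformly for $n$ in the block (decomposing it into the ``new $i$'' part and the ``extended local time'' part, both centered and conditionally independent given $\mathcal{S}$) and then either take a union bound over the block or run a dyadic chaining argument. With that corrected, the argument closes and matches the paper's.
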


\begin{proof}
    We can let $M_n$ denote the double sum in the lemma, and check directly that
    $M_n = \sum_{1\le i\le n-1}
    ( L^{S_i}_n-L^{S_i}_i)(q_i^2-1)$.
    Let $\mathcal{S}$ denote the $\sigma$-algebra generated by the
    entire process $S$. Then, conditionally on $\mathcal{S}$,
    each $M_n$ is a sum of independent  random variables.
    By Burkholder's inequality \cite[Theorem 2.10, p.\ 34]{HH},
    for all even integers $p\ge 2$,
    \begin{equation}
        \e\left(|M_n|^p\right)
        \le \text{const}\cdot
        \e\left(\left| \sum_{1\le i\le n-1}\left(L_n^{S_i}-L_i^{S_i}\right)^2
        (q_i^2-1)^2\right|^{p/2}\right).
    \end{equation}
%   \begin{equation}\begin{split}
%       &\e\left(|M_n|^p\right)\\
%       &\le \text{const}\cdot
%           \e\left(\left| \sum_{1\le i\le n-1}\left(L_n^{S_i}-L_i^{S_i}\right)^2
%           (q_i^2-1)^2\right|^{p/2}\right)\\
%       &=\text{const}\cdot\e\left(\mathop{\sum\cdots\sum}_{
%           1\le i_1,\ldots,i_{p/2}\le n-1} \prod_{k=1}^{p/2}
%           \left( L_n^{S_{i_k}}-L_{i_k}^{S_{i_k}}\right)^2
%           (q_{i_k}^2-1)^2\right).
%   \end{split}\end{equation}
    According to the generalized H\"older inequality,
    \begin{equation}
        \e\left(\prod_{k=1}^{p/2}(q_{i_k}^2-1)^2\right)
        \le \prod_{k=1}^{p/2}\left\{ \e\left(\left| q_{i_k}^2-1\right|^p\right)\right\}^{2/p}
        = \e\left(|q_1^2-1|^p\right).
    \end{equation}
    Another application of the generalized H\"older inequality,
    together with an appeal to the Markov property, yields
    \begin{equation}\begin{split}
        \e\left(\prod_{k=1}^{p/2}\left(L_n^{S_{i_k}}-L_{i_k}^{S_{i_k}}\right)^2\right)
            &\le \prod_{k=1}^{p/2}\left\{ \e\left( \left| L_n^{S_{i_k}}-L_{i_k}^{S_{i_k}}
            \right|^p\right)\right\}^{2/p}\\
        &=\prod_{k=1}^{p/2}\left\{\e\left(\left| L_{n-i_k}^0\right|^p\right)
            \right\}^{2/p}.
    \end{split}\end{equation}
    Therefore, we can apply the local-limit theorem to find that
    \begin{equation}\begin{split}
        \e\left(|M_n|^p\right)
%           &\le \text{const}\cdot
%           \left(\sum_{1\le i\le n-1}\left\{ \e\left(\left| L_{n-i}^0\right|^p\right)
%           \right\}^{2/p}\right)^{p/2}\\
%       &\le\text{const}\cdot
%           \left(\sum_{1\le l\le n-1}\left| \sum_{j=1}^l j^{-d/2}\right|^2
%           \right)^{p/2}\\
        &\le \text{const}\cdot\begin{cases}
                n^p&\text{if $d=1$},\\
                n^{\frac p2+o(1)}&\text{if $d\ge 2$}.
            \end{cases}
    \end{split}\end{equation}
    The lemma follows from this and the Borel--Cantelli lemma.
\end{proof}

\begin{lemma}\label{lem:2}
    The following holds almost surely: As $n\to\infty$,
    \begin{equation}\begin{split}
        \mathop{\sum\sum}_{1\le i<k\le n}
        \1_{\{S_i=S_k\}} q_i^2 \left(T_k-T_{k-1}-1\right) = \begin{cases}
            n^{1+o(1)}&\text{if $d=1$},\\[2mm]
            n^{\frac12+o(1)}&\text{if $d\ge 2$}.
        \end{cases}
    \end{split}\end{equation}
\end{lemma}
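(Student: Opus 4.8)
The plan is to expose a martingale hidden inside the double sum. Write $V_n$ for the double sum in the statement and group its terms according to the larger index $k$:
\begin{equation*}
    V_n=\sum_{k=2}^{n}A_k\left(T_k-T_{k-1}-1\right),
    \qquad\text{where}\qquad
    A_k:=\sum_{i=1}^{k-1}\1_{\{S_i=S_k\}}q_i^2 ,
\end{equation*}
so that $A_k$ is a charge-weighted occupation of the site $S_k$ up to time $k-1$. Set $\mathcal F_k:=\sigma(S)\vee\sigma\{(q_m,T_m-T_{m-1}):1\le m\le k\}$. Since the pairs $(q_m,T_m-T_{m-1})_{m\ge1}$ are i.i.d.\ and independent of $S$, the pair $(q_k,T_k-T_{k-1})$ is independent of $\mathcal F_{k-1}$ and $\e(T_k-T_{k-1})=1$, while $A_k$ depends only on $S$ and on $q_1,\dots,q_{k-1}$ and so is $\mathcal F_{k-1}$-measurable. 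Hence $\{V_n\}_{n\ge2}$ is a mean-zero $(\mathcal F_n)$-martingale with increments $D_k:=A_k(T_k-T_{k-1}-1)$. Note that here, unlike in Lemma \ref{lem:1}, the innovation $T_k-T_{k-1}-1$ is attached to the \emph{larger} index, so its coefficient $A_k$ does not grow with $n$; this is precisely why the exponents below are smaller than those of Lemma \ref{lem:1}.

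Next I would establish an $L^2$ estimate. By orthogonality of martingale increments and the independence of $A_k$ from $T_k-T_{k-1}$,
\begin{equation*}
    \e\!\left(V_n^2\right)=\sum_{k=2}^n\e\!\left(D_k^2\right)=\text{Var}(T_1)\sum_{k=2}^n\e\!\left(A_k^2\right),
\end{equation*}
and $\text{Var}(T_1)\le\text{const}\cdot\e(q_1^4)<\infty$. Because $q$ is independent of $S$, $\e(A_k^2)=\sum_{i,i'<k}\p\{S_i=S_k,\,S_{i'}=S_k\}\,\e(q_i^2q_{i'}^2)$; splitting off the diagonal $i=i'$, applying the Markov property to the off-diagonal part, and using the elementary Green's-function bounds $\sum_{m\le k}\p\{S_m=0\}\le C\sqrt k$ (for $d=1$), $\le C\log k$ (for $d=2$), $\le C$ (for $d\ge3$), one gets $\e(A_k^2)\le Ck$, $C(\log k)^2$, $C$ respectively, and hence $\e(V_n^2)\le Cn^2$, $Cn(\log n)^2$, $Cn$.

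To pass from this to the almost-sure bound with only an $o(1)$ loss, I would apply Doob's $L^2$ maximal inequality along the dyadic scale $n_j=2^j$: with $\lambda_j:=j\,\e(V_{2^j}^2)^{1/2}$ one has $\p\{\max_{k\le2^j}|V_k|>\lambda_j\}\le 4\,\e(V_{2^j}^2)/\lambda_j^2=4/j^2$, which is summable, so by the Borel--Cantelli lemma $\max_{k\le2^j}|V_k|\le j\,\e(V_{2^j}^2)^{1/2}$ for all large $j$, almost surely. For $n\in(2^{j-1},2^j]$ this is at most $n^{1+o(1)}$ when $d=1$ and at most $n^{1/2+o(1)}$ when $d\ge2$, the logarithmic factor in dimension two being absorbed into the $o(1)$.

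The main obstacle is getting the martingale bookkeeping exactly right rather than any single estimate. One must check that $A_k$ really is $\mathcal F_{k-1}$-measurable --- in particular that it involves $q_i^2=(W(T_i)-W(T_{i-1}))^2$ only for $i<k$ --- even though $q_k$ and the innovation $T_k-T_{k-1}$ are themselves correlated through the Skorohod embedding, so that one cannot simply condition on all of the charges at once. One must also recognise that it is precisely the martingale property, used through Doob's inequality on a geometric mesh, that converts the crude second-moment bound into an exponent carrying only an $o(1)$ correction, whereas a bare Chebyshev--Borel--Cantelli argument would lose a genuine power of $n$. The remaining steps --- the Markov-property manipulation of $\p\{S_i=S_k,S_{i'}=S_k\}$ and the Green's-function sums in each dimension --- are routine.
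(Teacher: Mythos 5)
Your proof is correct and follows essentially the same route as the paper: the same decomposition $V_n=\sum_{k} A_k(T_k-T_{k-1}-1)$ exposing a martingale in the $T$-increments, the same second-moment identity $\e(V_n^2)=\mathrm{Var}(T_1)\sum_k\e(A_k^2)$ (the paper phrases it as a conditional computation given $\sigma(S)$, but it is the same bound), the same Green's-function/local-limit estimates giving $n^2$, $n(\log n)^2$, $n$ in the three regimes, and the same finish via Doob's maximal inequality plus Borel--Cantelli on a geometric mesh.
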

\begin{proof}
    Let $N_n$ denote the double sum in the lemma, and  note
    that
    \begin{equation}\begin{split}
        N_n =& \sum_{2\le k\le n}\beta_{k-1} \left(T_k-T_{k-1}-1\right),
            \ \text{where}\\
        \beta_{k-1}:=& \sum_{1\le i\le k-1} q_i^2 \1_{\{S_i=S_k\}}.
    \end{split}\end{equation}
    Recall that $\mathcal{S}$ denotes the $\sigma$-algebra generated by the
    entire process $S$ and observe that, conditionally on $\mathcal{S}$,
    $\{N_n\}_{n=1}^\infty$ is a mean-zero martingale with
    \begin{equation}\begin{split}
        \e(N_n^2\, |\, \mathcal{S}) &= \text{Var}(T_1)\cdot
            \sum_{2\le k\le n}\e\left( \beta_{k-1}^2\, \big|\, \mathcal{S}\right)\\
        &=\text{Var}(T_1)\cdot \sum_{2\le k\le n}\left(
            \text{Var}(q_1^2) + \left| \e(q_1^2)\right|^2
            \cdot L_{k-1}^{S_k} \right) L_{k-1}^{S_k}.
    \end{split}\end{equation}
    This and Doob's inequality together show that
    \begin{equation}
        \e\left(\max_{1\le k\le n} N_k^2\right) \le\text{const}\cdot n
        \left( \max_{a\in\z}\e (L^a_n) + \max_{a\in\z}\e(|L^a_n|^2)
        \right).
    \end{equation}
    By the local-limit theorem, the preceding is at most
    a constant multiple of $n (\sum_{1\le i\le n} i^{-d/2} )^2$.
    The Borel--Cantelli lemma finishes the proof.
\end{proof}

\begin{proof}[Proof of Proposition \ref{pr:main}]
    Recall the definition of each $q_i$. With that in mind,
    we can decompose $\Xi_n$ as follows:
    \begin{equation}
        \Xi_n = \sum_{1\le k\le n}\int_{T_{k-1}}^{T_k}
        \d s\left( \mathop{\sum}_{1\le i< k} \1_{\{ S_i= S_k\}}
        q_i \right)^2
        = \Xi_n^{(1)} + \Xi_n^{(2)},
    \end{equation}
    where,
    \begin{equation}
        \Xi^{(1)}_n := \mathop{\sum\sum}_{1\le i< k\le n} \1_{\{S_i=S_k\}}
        q_i^2(T_k-T_{k-1}),
    \end{equation}
    and
    \begin{equation}
        \Xi^{(2)}_n := 2 \mathop{\sum\sum\sum}_{1\le i<j< k\le n}
        \1_{\{S_i=S_j=S_k\}} q_iq_j(T_k-T_{k-1}).
    \end{equation}
    Since
    \begin{equation}\begin{split}
        \Xi^{(1)}_n &=\mathop{\sum\sum}_{1\le i< k\le n} \1_{\{S_i=S_k\}} +
            \mathop{\sum\sum}_{1\le i< k\le n} \1_{\{S_i=S_k\}} \left(q_i^2-1\right)\\
        &\hskip1.5in+
            \mathop{\sum\sum}_{1\le i< k\le n} \1_{\{S_i=S_k\}} q_i^2\left(T_k-T_{k-1}-1\right),
    \end{split}\end{equation}
    Lemmas \ref{lem:alpha},
    \ref{lem:main}, \ref{lem:1}, and \ref{lem:2} together imply
    that $\Xi^{(1)}_n$ has the large-$n$ asymptotics that is
    claimed for $\Xi_n$. In light of Lemma \ref{lem:alpha},
    it suffices to show that  almost surely  the
    following holds  as $n\to\infty$:
    \begin{equation}\label{eq:Xi:2}
        \Xi^{(2)}_n =\begin{cases}
            n^{\frac{17}{12} + o(1)}&\text{if $d=1$ },\\
            n^{\frac34+o(1)}&\text{if $d\ge 2$}.
        \end{cases}
    \end{equation}

    We can write
    $\Xi^{(2)}_n  =2\sum_{k=3}^n \tau_{k-1}(S_k) (T_k-T_{k-1} )$,
    where
    \begin{equation}
        \tau_{k-1}(z):= \mathop{\sum\sum}_{1\le i<j\le k-1} \1_{\{
        S_i=S_j=z\}}q_iq_j
        \qquad\text{for $z\in\z$ and $k> 1$}.
    \end{equation}
    In particular, we can write
    \begin{equation}
        \Xi^{(2)}_n := 2\left(a_n + b_n\right),
        \qquad\text{where}
    \end{equation}
    \begin{equation}
        a_n := \sum_{k=3}^n \tau_{k-1}(S_k)\left(T_k-T_{k-1}-1\right)
        \quad\text{and}\quad
        b_n := \sum_{k=3}^n \tau_{k-1}(S_k).
    \end{equation}

    Recall that $\mathcal{S}$ denotes the $\sigma$-algebra generated by
    the process $S$. It follows that, conditional on $\mathcal{S}$,
    the process $\{a_n\}_{n=1}^\infty$ is a mean-zero martingale, and
    \begin{equation}
        \e \left( a_n^2 \,  \big|\, \mathcal{S} \right)
        = \text{Var}(T_1)\cdot\sum_{k=3}^n \e \left( |\tau_{k-1}(S_k)|^2\,
        \big|\, \mathcal{S} \right).
    \end{equation}
    The latter conditional expectation is also computed by a martingale
    computation. Namely,
    we write $\tau_{k-1}(z) = \sum_{j=2}^{k-1} (
    \sum_{i=1}^{j-1} \1_{\{S_i=S_j=z\}}q_i )q_j$
    for all $z\in\z$    in order to deduce that
    \begin{equation}
        \e\left( |\tau_{k-1}(z)|^2\,\big|\, \mathcal{S}\right)
%           =\mathop{\sum\sum}_{
%           1\le i<j\le k-1} \1_{\{S_i=S_j=z\}}
            = \sum_{j=2}^{k-1} \1_{\{S_j=z\}}
            L^z_{j-1}
        \le \left( L^z_{k-1}\right)^2.
    \end{equation}
    It follows from Doob's maximal inequality that
    \begin{equation}
        \e \left( \max_{1\le k\le n}a_k^2 \right)
        \le 4\text{Var}(T_1)\cdot\e\left(
        \sum_{k=3}^n (L^{S_k}_{k-1})^2\right).
    \end{equation}
    By time reversal, we can replace $L^{S_k}_{k-1}$ by $L^0_{k-1}$.
    Therefore, the local-limit theorem implies that
    $\e  ( \max_{1\le k\le n}a_k^2  )
    \le \text{const}\cdot n (\sum_{i=1}^n i^{-d/2} )^2$,
    and hence almost surely as $n\to\infty$,
    \eqref{eq:Xi:2} is satisfied with $\Xi^{(2)}_n$ replaced by
    $a_n$ [the Borel--Cantelli lemma].
    It suffices to prove that \eqref{eq:Xi:2} holds if
    $\Xi_n$ is replaced by $b_n$.

    We can write $b_n:=b_{n,n}$, where
    \begin{equation}
        b_{n,k} =\sum_{j=2}^{n-1} \theta_{j-1,k}\,q_j
        \quad\text{for}\quad
        \theta_{j-1,k} :=\sum_{i=1}^{j-1} \1_{\{S_i=S_j\}}
        q_i\left( L^{S_i}_k - L^{S_i}_j\right).
    \end{equation}
    For each fixed integer $k\ge 1$, $\{b_{n,k}\}_{n\ge 3}$ is a mean-zero martingale,
    conditional on $\mathcal{S}$. Therefore, Burkholder's inequality yields
    \begin{equation}
        \e\left(|b_{n,k}|^p\,\big|\,\mathcal{S}\right) \le
        \text{const}\cdot\e\left(\left.\left[
        \sum_{j=2}^{n-1}\theta_{j-1,k}^2\,q_j^2
        \right]^{p/2}\, \right|\ \mathcal{S}\right),
    \end{equation}
    where the implied constant is nonrandom and depends only on $p$.
    Since $|\sum_{j=1}^{n-1} x_j|^{p/2}\le n^{\frac p2-1}\sum_{j=1}^{n-1}|x_j|^{p/2}$
    for all real $x_1,\ldots,x_{n-1}$, we can apply the preceding with $k:=n$ to obtain
    \begin{equation}
        \e\left(|b_n|^p\,\big|\,\mathcal{S}\right) \le
        \text{const}\cdot \e(|q_1|^p)\cdot
        n^{\frac p2-1} \sum_{j=2}^{n-1}
        \e\left( |\theta_{j-1,n}|^p\, \big|\ \mathcal{S}\right).
    \end{equation}
    Yet another application of Burkholder's inequality yields
    \begin{equation}\begin{split}
        \e\left(|\theta_{j-1,n}|^p\,\big|\ \mathcal{S}\right)
            &\le \text{const}\cdot\e\left(\left| \sum_{i=1}^{j-1}
            \1_{\{S_i=S_j\}}q_i^2
            \left(L^{S_i}_n-L^{S_i}_j\right)^2
            \right|^{p/2}\right)\\
%       &\le\text{const}\cdot\e(|q_1|^p)\cdot\left|\sum_{i=1}^{j-1}
%           \1_{\{S_i=S_j\}}\left(L^{S_i}_n-L^{S_i}_j\right)^2\right|^{p/2}\\
        &\le\text{const}\cdot\e(|q_1|^p)\cdot\left( L_{j-1}^{S_j}
            \right)^{p/2}\left(L^{S_j}_n-L^{S_j}_j\right)^p,
    \end{split}\end{equation}
    since $\e(q_{i_1}^2\cdots q_{i_{p/2}}^2)\le\e(|q_1|^p)$ for all
    $1\le i_1,\ldots,i_{p/2}<j$. We take expectations and apply the
    Markov property and time reversal to find that
    \begin{equation}
        \e\left(|\theta_{j-1,n}|^p\right)
%       \le\text{const}\cdot\e(|q_1|^p)\cdot
%       \e\left[\left( L^{S_j}_{j-1}\right)^{p/2}\right]
%       \e\left[\left( L^0_{n-j}\right)^p\right]\\
        \le \text{const}\cdot\e(|q_1|^p)\cdot
        \e\left[\left( L^0_{j-1}\right)^{p/2}\right]
        \e\left[\left( L^0_{n-j}\right)^p\right].
    \end{equation}
    It follows readily that
    \begin{equation}
        \e\left(|b_n|^p\right) \le \text{const}\cdot \e(|q_1|^p)\cdot
        n^{p/2}\e\left[\left( L^0_n\right)^{p/2}\right]
        \e\left[\left( L^0_n\right)^p\right]\\
%       \le \text{const}\cdot \e(|q_1|^p)\cdot n^{p/2}\left(
%       \sum_{i=1}^n i^{-d/2}\right)^{3p/2}.
    \end{equation}
    This, the local-limit theorem, and the Borel--Cantelli
    lemma together imply that \eqref{eq:Xi:2} holds with $b_n$ in place
    of $\Xi^{(2)}_n$. The proposition follows.
\end{proof}

\section{Proof of Theorem \ref{T2}}

In  view of Theorem \ref{T1} and the LIL for the Brownian motion,
it suffices to consider only the
case $d=1$, and to establish the following:
\begin{equation}\label{LIL}
    \liminf_{n \to\infty} \left(\frac{\L_2 n}{n}\right)^{3/4}
    \max_{0\le k \le n} |\gamma(\alpha(k)) | = (a^*)^{3/4}
    \frac{\pi}{\sqrt{8}},
\end{equation}
where
\begin{equation}
    \L_2 x:=\log\log(x\vee 1)
    \quad\text{and}\quad
    \alpha(t) := \int_{-\infty}^\infty (\ell^x_t)^2\,\d x.
\end{equation}
 It is known that \cite[Theorem 3]{CCFR83},
 $\sup_{x\in\r} \sup_{1\le k\le n} (\ell^x_k - \ell^x_{k-1}) =o((\log n)^{-1/2})$
 almost surely $[\p]$. This implies readily that
$\max_{0\le k \le n} (\alpha(k+1) -
\alpha(k-1)) = O(n \sqrt{\log n})$ a.s. Therefore, it follows from
\cite[Theorem 1.2.1]{CR81}  that  \eqref{LIL} is equivalent to the following:
\begin{equation}\label{LIL2}
    \liminf_{t \to\infty} \left(\frac{\L_2 t}{t} \right)^{3/4}
    \sup_{0\le s \le t} |\gamma(\alpha(s)) | =
    (a^*)^{3/4} \frac{\pi}{\sqrt{8}}.
\end{equation}

Brownian scaling implies that $\alpha(t)$ and
$t^{3/2} \alpha(1)$ have the same distribution.
On one hand, Proposition 1 of \cite{HHK97} tells us that
the limit $C:=\lim_{\lambda \to \infty} \exp\{a^*\, \lambda^{2/3}\}\,  \e
\exp( -\lambda \alpha(1))$
exists and is positive and finite.
On the other hand, we can
write $\gamma^*(t):=\sup_{0\le s\le t} |\gamma(s)|$ and appeal
to Lemma 1.6.1 of
\cite{CR81} to find that for all $t, y>0$,
\begin{equation}
    \frac2\pi \exp\left(- \frac{\pi^2 t}{8y^2}\right) \le
    \p\left\{ \gamma^*(t) < y \right\}\le \frac4\pi \exp\left(- \frac{\pi^2 t}{8y^2}\right).
\end{equation}
Therefore uniformly for all $t>0$ and $x\in(0\,,1]$,
\begin{equation}\label{small}\begin{split}
    \p\left\{\sup_{0\le s \le t } |\gamma(\alpha(s)) | < x t^{3/4}
        \right\} &=   \p\left\{\gamma^*(\alpha(1)) <
        x\right\}
        \le  {4\over \pi}  \e\,{\rm e}^{-\pi^2\alpha(1)/(8x^2)}\\
    &\hskip.4in\le  \text{const}\cdot
        \exp\left( -\frac{a^*}{x^{4/3}}\left( \frac{\pi^2}{8} \right)^{2/3}\right).
\end{split}\end{equation}
This and an application of the Borel--Cantelli lemma together yield
one half of the \eqref{LIL2}; namely, \eqref{LIL2} where ``='' is
replaced by ``$\ge$.''
In order to derive the other half we choose $t_n := n^n$ and $c>
(a^*)^{3/4} \pi /\sqrt 8$, and define
\begin{equation}
    A_n :=\left\{\omega:\,
    \sup_{0\le s \le t_n } |\gamma(\alpha(s)) | < c  \left(
    \frac{t_n}{\L_2 t_n}\right)^{3/4} \right\}.
\end{equation}
Every $A_n$ is measurable with respect to ${\cal
F}_{t_n}:= \sigma\{\gamma(u):\,u\le \alpha(t_n)\}
\vee\sigma\{B_v:\, v\le t_n\}$.
In light of the 0-1 law of Paul L\'evy, and since $c>(a^*)^{3/4} \pi /\sqrt 8$
is otherwise arbitrary, it suffices to prove that
\begin{equation}\label{levy}
    \sum_{n=1}^\infty \p\left(\left. A_n \ \right|\, {\cal F}_{t_{n-1}}\right)=\infty
    \qquad\as
\end{equation}

The argument that led to \eqref{small} can be
used to show that for all $v\ge 0$,
\begin{align}\label{small2}
    \p\left\{ \gamma^*(v+\alpha(t)) < x t^{3/4}\right\}
        &\ge \frac2\pi \exp\left( -\frac{\pi^2 v}{8 x^2 t^{3/2}}\right)\,
        \e\,{\rm e}^{-\pi^2\alpha(1)/(8x^2)}\\\nonumber
      &\ge \text{const}\cdot    \exp\left(
        - {\pi^2 v\over 8 x^2 t^{3/2}} - \frac{a^*}{x^{4/3}} \left( \frac{\pi^2}{8}
        \right)^{2/3} \right).
\end{align}

In order to prove \eqref{levy}, let us choose and fix a large integer $n$
temporarily. We might note that
\begin{equation}
    \sup_{0\le s \le t_n } |\gamma(\alpha(s)) |
    = \gamma^*(\alpha(t_n)) \le  \gamma^*(\alpha(t_{n-1})) +
    \widetilde\gamma^* (\alpha(t_n) - \alpha(t_{n-1})),
\end{equation}
where $\widetilde\gamma(s) = \widetilde\gamma_n(s):= \gamma(s+ \alpha(t_{n-1})) - \gamma(\alpha(t_{n-1}))$
and $\widetilde\gamma^*(s):= \sup_{0\le v\le s} |\widetilde\gamma(v)|$ for $s\ge 0$.
Of course, $\widetilde\gamma$ is a Brownian motion independent of ${\cal F}_{t_{n-1}}$.
Moreover, we can write
$\ell_{t_n}^x = \ell_{t_{n-1}}^x + \widetilde \ell_{t_n - t_{n-1}}^{x- B_{t_{n-1}}}$,
where $\widetilde \ell$ denotes the local time process of the
Brownian motion $\widetilde B(s):= B(s+ t_{n-1}) - B(t_{n-1}), s\ge 0$.
Clearly, $(\widetilde \gamma\,, \widetilde B)$ is a two-dimensional Brownian motion,
independent of ${\cal F}_{t_{n-1}}$.  Observe that
\begin{align}\nonumber
    \alpha_{t_n} - \alpha_{t_{n-1}} &=
        \int \d x  \left[ (\ell_{t_n}^x )^2 -  (\ell_{t_{n-1}}^x )^2 \right]
        =  \int \d x\ \widetilde \ell_{t_n- t_{n-1}}^{ x - B_{t_{n-1}}}
        \left( \ell_{t_{n-1}}^x + \widetilde \ell_{t_n- t_{n-1}}^{ x - B_{t_{n-1}}}
        \right)\\
    &\le (t_n - t_{n-1}) \ell^*_{t_{n-1}} + \widetilde \alpha_{t_n - t_{n-1}},
\end{align}
where $\ell^*_{t_{n-1}}:= \sup_{x\in \r} \ell^x_{t_{n-1}}$. Therefore, we obtain
\begin{equation}
    \gamma^*(\alpha_{t_n}) \le  \gamma^*(\alpha_{t_{n-1}}) +
    \widetilde \gamma^* \left((t_n - t_{n-1}) \ell^*_{t_{n-1}} +
    \widetilde \alpha_{t_n - t_{n-1}}\right).
\end{equation}

Let $\varepsilon>0$ be such that $2\varepsilon< c-(a^*)^{3/4} \pi / \sqrt{8}$,
and define
\begin{equation}
    D_n:= \left\{ \gamma^*(\alpha_{t_{n-1}}) < \varepsilon
    \left({t_n \over \L_2  t_n}\right)^{3/4},\, \ell^*_{t_{n-1}}
    \le \sqrt{3t_{n-1 }\L_2  t_{n-1 }  }  \right\}.
\end{equation}
Clearly, $D_n$ is ${\cal F}_{t_{n-1}}$-mesurable.
Let $v_n:= t_n \sqrt{3t_{n-1} \L_2 t_{n-1  } }$. Since
\begin{equation}
    A_n \supset D_n \cap \left\{ \widetilde \gamma^*
    (v_n + \widetilde \alpha_{t_n  }) \le (c-\varepsilon) \left({t_n \over \L_2  t_n}
    \right)^{3/4}  \right\},
\end{equation}
we can deduce from \eqref{small2} that
\begin{align}\nonumber
    &\p ( A_n \,  |\, {\cal F}_{t_{n-1}} ) \\\nonumber
    &\ge \text{const} \cdot \1_{D_n} \exp\left(
        - \frac{\pi^2 v_n}{8}\left(\frac{\L_2  t_n}{t_n}\right)^{3/2}  -
        a^* \left(\frac{\pi^2}{8(c-\varepsilon)^2}\right)^{2/3} \L_2  t_n\right)\\
    &\ge  \text{const}\cdot
        \1_{D_n} \, (n\ln n)^{ - a^* (\pi^2/8)^{2/3} (c-\varepsilon)^{-4/3} },
\end{align}
where we have used the fact that  $v_n /t_n^{3/2} \sim 1/n$.
Because $a^* (\pi^2/8)^{2/3} (c-\varepsilon)^{-4/3} < 1$,
\eqref{levy} implies that almost surely, $\1_{D_n}=1$ for all $n$ large.
Indeed, the LIL tells us that almost surely for all large $n$,
$\ell^*_{t_{n-1}} \le \sqrt{3t_{n-1 }\L_2 t_{n-1 }  } $,
and $\gamma^*(\alpha_{t_{n-1}}) \le \sqrt{ 3 \alpha_{t_{n-1}}
\L_2 \alpha_{t_{n-1}}}$. Since $\alpha(t) =\int (\ell_t^x)^2 dx \le t\, \ell^*_t $, we
find that $\alpha_{t_{n-1}} \le t_{n-1}^{3/2} \sqrt{3 \L_2 t_{n-1}}$.
Since $t_{n-1}/t_n \sim 1/n$, it follows that almost surely,
$\ell^*_{t_{n-1}} \le  \varepsilon (t_n / \L_2 t_n)^{3/4} $
for all large $n$ and prove that $D_n$ realizes eventually for all large
$n$. The proof of Theorem \ref{T2} is complete. \qed\\

\noindent\textbf{Acknowledgements.} We thank the University of Paris
for generously hosting the second author (D.K.), during which
time much of this research was carried out.

\begin{small}

\vskip1cm

\noindent
\textbf{Yueyun Hu.}  D\'epartement de Math\'ematiques,
    Universit\'e Paris XIII, 99 avenue J-B Cl\'ement, F-93430 Villetaneuse, France,
    \quad\emph{Email:} {\tt yueyun@math.univ-paris13.fr}\\

\noindent
\textbf{Davar Khoshnevisan.} Department of Mathematics,  University of Utah,
    155 South 1440 East, JWB 233, Salt Lake City,  Utah 84112-0090, USA,
    \\\emph{Email:} {\tt davar@math.utah.edu}

\end{small}

\end{document}